\theoremstyle{plain}
\newtheorem{Thm}{Theorem}
\theoremstyle{remark}
\def\({\left(}
\def\){\right)}
\def\[{\left[}
\def\]{\right]}
\def\<{\left<}
\def\>{\right>}
\newcommand{\eq}[1]{\eqref{E#1}}
\newcommand{\Eq}[2]{\ifthenelse{\equal{#1}{*}}
  {\begin{equation*}\begin{aligned}#2\end{aligned}\end{equation*}}
  {\begin{equation}\begin{aligned}\label{E#1}#2\end{aligned}\end{equation}}}
\begin{document}

\title{Determinants of some  pentadiagonal matrices}

\author{L\'aszl\'o Losonczi}
\address{Faculty of Economics, University of Debrecen, Hungary}
\email{losonczi08@gmail.com, laszlo.losonczi@econ.unideb.hu}

\subjclass{15A15, 15B99, 65F40, 15A42}
\keywords{Toeplitz matrix, pentadiagonal, tridiagonal matrices}
\date{\today}

\begin{abstract}
In this paper we consider pentadiagonal $(n+1)\times(n+1)$ matrices with two subdiagonals  and two superdiagonals  at distances $k$ and $2k$ from the main diagonal where $1\le k<2k\le n$. We give an explicit formula for their determinants and also consider the Toeplitz and ``imperfect'' Toeplitz versions of such matrices. Imperfectness means that the first and last $k$ elements of the main diagonal differ from  the elements in the middle. Using the rearrangement due to Egerváry and Szász we also show how these determinants can be factorized.
\end{abstract}
\maketitle

\medskip

\section{Introduction}
Matrices have a special space in mathematics. Their theory is still actively researched and used by almost every mathematician and by several scientists working in various areas.
The research on multidiagonal, in particular tridiagonal and pentadiagonal matrices, intensified in the past years. These matrices have important applications in optimization problems \cite{CC}, autoregression modelling \cite{TL}, approximation theory \cite{PW}, Gauss-Markov random processes \cite{BU}, orthogonal polynomials,  solving elliptic and parabolic PDE's with finite difference methods \cite{FSS},  inequalities (quadratic, Wirtinger, Opial's type) \cite{E}, \cite{L2}.

Let $n,k$ be given positive integers  with $1\le k<2k\le n$ and denote by $\mathcal{M}_{n}$ the set of  $n\times n$ complex matrices.
Consider the pentadiagonal matrix $A_0=(a_{ij})\in \mathcal{M}_{n+1}$
where for $i,j=0,1,\dots,n$,
$$
a_{ij}=\left\{\begin{array}{ll}
  L_j& \text{if}\; j-i=-2k, \\
  l_j& \text{if}\; j-i=-k,  \\
  d_i& \text{if}\; j-i=0, \\
  r_i& \text{if}\; j-i=k,\\
  R_i& \text{if}\; j-i=2k,\\
  0 & \text{otherwise}.
\end{array}\right.
$$
$A_0$ has two subdiagonals  and two superdiagonals at distances $k$ and $2k$ from the main diagonal. Notice that the numbering of entries starts with zero.
We  also use  $A_{n+1,k,2k}(\bf L,l,d,r,R)$ to denote the matrix $A_0$ and $D_0=D_{n+1,k,2k}(\bf L,l,d,r,R)$ to denote its determinant where the diagonal vectors $\bf L,l,d,r,R$ are   defined by
$$
\renewcommand\arraystretch{0.75}
\begin{array}{rll}
\bf{L}&=(L_0,\dots,L_{n-2k})\quad \,\bf{l}&=(l_0,\dots,l_{n-k})\\
\bf{d}&=(d_0,\dots,d_{n})&\\
\bf{R}&=(R_0,\dots,R_{n-2k})\quad \,\bf{r}&=(r_0,\dots,r_{n-k})\\
\end{array}
$$
If  $\bf L, R$ are zero vectors then our matrix  becomes a tridiagonal one denoted by $A_{n+1,k}(\bf l,d, r)$ and its determinant by $D_{n+1,k}(\bf l,d, r)$.

We shall call $A_{n+1,k,2k}(\bf L,l,d, r,R)$ a (general) $k,2k$-pentadiagonal matrix while $A_{n+1,k}(\bf l,d, r)$ will be termed as $k$-tridiagonal.

In case of Toeplitz pentadiagonal matrices the diagonal vectors are constant vectors, i.e.
\Eq{*}{
 {\bf L}&=(L,\dots,L),{\bf R}=(R,\dots,R)\in \mathbb{R}^{n+1-2k},\\
{\bf l}&=(l,\dots,l),\quad\,{\bf r}=(r,\dots,r)\in \mathbb{R}^{n+1-k},\\
{\bf d}&=(d,\dots,d)\in \mathbb{R}^{n+1},
}
and for the matrix and its determinant the notations $A_{n+1,k,2k}(L,l,d, r,R)$ and $D_{n+1,k,2k}(L,l,d, r,R)$ will be used.

Marr and Vineyard \cite{MV})  have shown that the product of two $1$-tridiagonal Toeplitz matrix is an imperfect Toeplitz matrix $A_{n+1,1,2}^{(\alpha,\beta)}$ which is related to the corresponding Toeplitz matrix by a two-step recursion. Imperfectness means that the main diagonal is changed from $(d,\dots,d)\in \mathbb{R}^{n+1}$ to
\Eq{*}{
(\underbrace{d-\alpha,\dots,d-\alpha}_{k},\underbrace{d,\dots,d}_{n+1-2k},
\underbrace{d-\beta,\dots,d-\beta}_{k})\in\mathbb{R}^{n+1}
}
where $\alpha, \beta$ are given reals.
They found the determinants of Toeplitz and imperfect Toeplitz $1,2$-pentadiagonal matrices in terms of Chebyshev polynomials of the second kind. A similar approach was used in \cite{WLZ} to find a formula for the inverse of a $1,2$-pentadiagonal Toeplitz matrix.
Imperfect $k,2k$-pentadiagonal Toeplitz matrices will be denoted by
$A_{n+1,k,2k}^{(\alpha,\beta)}(L,l,d, r,R)$, their determinants by $D_{n+1,k,2k}^{(\alpha,\beta)}(L,l,d, r,R)$.
\medskip

A number of papers studied general $1,2$-pentadiagonal matrices and their Toeplitz versions. Algorithms and recursive formulas were found for  their determinants \cite{S1969},  \cite{Ha-Elo2008_AMC}, \cite{EA}, \cite{JYL2016} and inverses \cite{Ha-Elo2008_2}. Explicit formulas were found for the determinants of symmetric \cite{Elo2018}  skew-symmetric \cite{Elo2011} and general \cite{JYL2016} Toeplitz $1,2$-pentadiagonal matrices.
Perhaps the first appearance of $k,\ell$-pentadiagonal (Toeplitz Hermitian) matrices (where the distances of the sub- and superdiagonals from the main one are $k$ and $\ell$)
 was in  Egerv\'ary and  Sz\'asz \cite{E} with $k+\ell=n+1$, while  $k$-tridiagonal matrices appeared first in \cite{El-MS2010}. In  \cite{LiLi2016} new       sub/superdiagonals were added. A graph theoretical approach can be found in \cite{dF2015,dFKL20xx} and  some extensions in \cite{McM2009,TSU2019}. An exhaustive list of recent references is given in the survey \cite{dFK2020}.

 In spite of the large numbers of papers on  pentadiagonal  matrices there are only few of them in which determinants are given in terms of the entries.

In \cite{dFL} we developed a method to reduce  the determinant of $k,\ell$-pentadiagonal matrices  to tridiagonal determinants provided that $k+\ell\ge n+1$. If $k\ge (n+1)/3$ then by this method in \cite{dFL2} we  determined the determinants of general, Toeplitz and imperfect Toeplitz $k,2k$-pentadiagonal matrices. We proved (among others)

\begin{Thm} Assuming $(n+1)/3\le k\le n/2$ the determinant $D_{n+1,k,2k}(\bf L,l,d,r,R)$ of the general $k,2k$-pentadiagonal matrix  is
\Eq{*}{
\prod\limits_{j=0}^{n-2k}\!(d_jd_{j\!+\!k}d_{j\!+\!2k}\!-\!d_jl_{j\!+\!k}r_{j\!+\!k}\!-
\!L_jR_{j}d_{j\!+\!k}
\!-\!l_jr_{j}d_{j\!+\!2k}\!+\!l_jR_{j}l_{j\!+\!k}\!+\!r_jL_{j}r_{j\!+\!k})\!\!\!
\prod\limits_{j=n+1-2k}^{k-1}\!\!\!
(d_{j}d_{j\!+k\!}\!-\!l_{j}r_{j}).
}
\end{Thm}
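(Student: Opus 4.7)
The plan is to apply the Egerv\'ary--Sz\'asz rearrangement alluded to in the abstract. The only nonzero entries $a_{ij}$ of $A_0$ satisfy $j-i\in\{-2k,-k,0,k,2k\}$, hence $i\equiv j\pmod{k}$. Consequently, if we group the index set $\{0,1,\ldots,n\}$ by residue class modulo $k$ and apply one and the same permutation $\sigma$ to rows and columns of $A_0$, the resulting matrix $P_\sigma A_0 P_\sigma^{-1}$ is block diagonal and has the same determinant as $A_0$ (the sign of $\sigma$ cancels).

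First I would analyze the residue classes $C_r=\{j\in\{0,\ldots,n\}:j\equiv r\pmod{k}\}$ for $r=0,1,\ldots,k-1$. The hypothesis $k\le n/2$ gives $2k\le n$, so every class contains at least the two elements $r,r+k$, while $k\ge(n+1)/3$ gives $3k\ge n+1$, so no class contains $r+3k$. Concretely,
\[
C_r=\begin{cases}\{r,r+k,r+2k\},&0\le r\le n-2k,\\ \{r,r+k\},&n-2k+1\le r\le k-1.\end{cases}
\]
I would then choose $\sigma$ to list the indices in the order $C_0,C_1,\ldots,C_{k-1}$, each class in increasing order. By the residue observation, the only nonzero entries of $A_0$ lie in the ``diagonal'' submatrices indexed by $C_r\times C_r$, so in the rearranged matrix each $C_r$ contributes a diagonal block $B_r$ of size $|C_r|$. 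Reading the entries directly from the definition of $a_{ij}$,
\[
B_r=\begin{pmatrix}d_r&r_r&R_r\\ l_r&d_{r+k}&r_{r+k}\\ L_r&l_{r+k}&d_{r+2k}\end{pmatrix}\quad(r\le n-2k),\qquad
B_r=\begin{pmatrix}d_r&r_r\\ l_r&d_{r+k}\end{pmatrix}\quad(r>n-2k).
\]

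Finally, I would expand each block determinant by hand. The $2\times 2$ blocks give $d_r d_{r+k}-l_r r_r$, while the $3\times 3$ blocks expand (via the Sarrus / cofactor rule) to
\[
d_r d_{r+k}d_{r+2k}-d_r l_{r+k}r_{r+k}-L_r R_r d_{r+k}-l_r r_r d_{r+2k}+l_r R_r l_{r+k}+r_r L_r r_{r+k}.
\]
Multiplying these over all residues and renaming $r$ to $j$ yields the stated formula. No step is genuinely hard; the main thing to verify carefully is that the hypothesis $(n+1)/3\le k\le n/2$ is exactly what forces $|C_r|\in\{2,3\}$ for every $r$, so that the block diagonal reduction produces precisely the $2\times 2$ and $3\times 3$ factors with the index ranges $0\le j\le n-2k$ and $n+1-2k\le j\le k-1$ appearing in the theorem.
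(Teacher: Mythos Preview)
Your argument is correct. The residue-class partition, the block sizes $|C_r|\in\{2,3\}$ forced by $(n+1)/3\le k\le n/2$, the identification of each block with the displayed $2\times2$ or $3\times3$ submatrix, and the explicit determinant expansions are all accurate.

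However, this is not the route by which Theorem~1 was originally proved. The paper cites Theorem~1 from \cite{dFL2}, where it was obtained by the elimination method of \cite{dFL}: one multiplies $A_0$ on both sides by suitable unit-triangular matrices $B^{(l)},B^{(r)},B^{(L)},B^{(R)}$ to peel off $k$ rows and columns at a time, tracking how the diagonal entries transform, until a diagonal matrix remains. Section~2 of the present paper repeats exactly this Gaussian-elimination style reduction (for the complementary range $k<(n+1)/3$), and the computation of $d_j^{(0)}d_{j+k}^{(1)}d_{j+2k}^{(2)}$ carried out there recovers precisely the $3\times3$ factor you wrote down.

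What you have done instead is the Egerv\'ary--Sz\'asz rearrangement that the paper introduces only in Section~3 (equation \eq{decomp} and Theorem~9). Specialized to $q=2$ (or $q=3,\,p=0$), that decomposition yields $1,2$-pentadiagonal blocks of orders $2$ and $3$, which are exactly your $B_r$. So your proof is essentially Theorem~9 restricted to the parameter range of Theorem~1. Your approach is shorter and more transparent here because the blocks are tiny and their determinants can be written by hand; the elimination approach, by contrast, produces the factors one diagonal element at a time via the iterations \eq{iterlrs}--\eq{iterds}, which is heavier machinery for this case but is what the paper needs in order to handle arbitrary $q$ without an explicit block-determinant formula.
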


If the condition $k+\ell\ge n+1$ is not satisfied then the method given in \cite{dFL}  is not applicable in general. However, if $\ell=2k$ then with some modification it is applicable. In Section 2 we develop this modification and extend Theorem 1 to the case when the restriction $(n+1)/3\le k$ is dropped. Finally in Section 3 we show  how the determinants $D_{n+1,k,2k}(L,l,d, r,R)$ can be factorized and also discuss the case of Toeplitz determinants.

\section{Reduction of general $k,2k$-pentadiagonal determinants to tridiagonal ones}
Our starting point is the matrix $A_0=A_{n+1,k,2k}({\bf L,l,d, r,R})$
where $1\le k<2k\le n.$ Let $n+1=kq+p$ where $0\le p<k$ and suppose that $k<(n+1)/3$ (or $q\ge 3$).

In the first process we multiply $A_0$ by four suitable matrices such that in the product matrix the first $k$ rows and columns contains  only zeros except the diagonal and in the lower $(n+1-k)\times(n+1-k)$ block we get a pentadiagonal matrix whose structure is similar to that of $A_0$.
\medskip

(i) Let $B_1^{(l)}\in \mathcal{M}_{n+1}$ with entries
\Eq{B_1^{(l)}}{
b_{ij}^{(1,l)}=\left\{\begin{array}{rl}
1 &\hbox{if } i=j,\\
-l_j/d_j&\hbox{if } i=k,\dots,2k-1,j=i-k,\\
0 &\hbox{otherwise}
\end{array}\right.
}
and multiply $A_0$ from the left by $B_1^{(l)}$. The effect of this multiplication is the same as multiplying  the rows $0,\dots,k-1$ of $A_0$ by the numbers $-l_0/d_0,\dots,-l_{k-1}/d_{k-1}$ and adding them to the rows $k,\dots,2k-1$, respectively. The  elements $l_0,\dots,l_{k-1}$ of the $k$th  subdiagonal disappear and the diagonal elements $d_k,\dots,d_{2k-1}$ change to
\Eq{dj1}{
 d_j^{(1)}:=d_j-l_{j-k}r_{j-k}/d_{j-k} \, \, \quad \mbox{for $j=k,\dots,2k-1$}.
}
The $2k$th superdiagonal  remains unchanged, however all its elements  $R_0,\dots, R_{k-1}$ multiplied one by one by $l_0/d_0,\dots,-l_{k-1}/d_{k-1}$, respectively   move down by $k$ units and added to $r_k, \dots, r_{2k-1}$ thus these elements change to
\Eq{rj1}{
r_j^{(1)}=r_j -R_{j-k}l_{j-k}/d_{j-k}\, , \quad \text{if } j=k,\dots,2k-1.
}
Please note that the position of entries $-l_j/d_j\,(j=0,\dots,k-1)$ in the matrix $B_1^{(l)}$ is the same as the position of $l_j\,(j=0,\dots,k-1)$ in the matrix $A_{n+1,k,\ell}(\bf L,l,d,r,R)$, the position of the entries we want to eliminate. The matrices $B_1^{(r)},B_1^{(L)},B_1^{(R)}$ in the following multiplications also have similar structures.
\medskip

(ii) Multiply $B_1^{(l)}A_0$ from the right by $B_1^{(r)}\in \mathcal{M}_{n+1}$ with entries
\Eq{B_1^{(r)}}{
b_{ij}^{(1,r)}:=\left\{\begin{array}{rl}1 &\hbox{if } i=j,\\
-r_i/d_i&\hbox{if } i=0,\dots, k-1;j=i+k,\\
0 &\hbox{otherwise}
\end{array}\right.
}
The effect of this, is multiplication of  the columns $0,\dots,k-1$ of $B_1^{(l)}A_0$ by $-r_0/d_0,\dots,-r_{k-1}/d_{k-1}$ and addition of these products to the columns $k,\dots,2k-1$, respectively. The  elements $r_0,\dots,r_{k-1}$ in these columns disappear the diagonal elements remain unchanged.

The $2k$th  subdiagonal  remains unchanged, however its elements  $L_0,\dots, L_{k-1}$ multiplied one by one by $-r_0/d_0,\dots,-r_{k-1}/d_{k-1}$ respectively  move to the right by $k$ units and added to the elements $l_{2k},\dots,l_{3k-1}$, hence these elements change to
\Eq{lj1}{
l_{j}^{(1)}=l_j-L_{j-k}r_{j-k}/d_{j-k}\, , \quad \text{if }j=2k,\dots,3k-1.
}
\medskip

(iii) Next multiply $B_1^{(l)}A_0B_1^{(r)}$ from the left by $B_1^{(L)}\in \mathcal{M}_{n+1}$ with entries
\Eq{B_1^{(L)}}{
b_{ij}^{(1,L)}:=
\left\{\begin{array}{ll}1 &\hbox{if } i=j,\\
-L_j/d_j&\hbox{if } i=2k,\dots,3k-1;j=i-2k,\\
0 &\hbox{otherwise}
\end{array}\right.
}
The effect of this on the matrix $B_1^{(l)}A_0B_1^{(r)}$ is multiplication of its rows $0,\dots,n-2k$  by $-L_0/d_0,\dots,$ $-L_{n-2k}/d_{n-2k}$ and adding  these products to the rows $2k,\dots,3k-1$, respectively. The  elements $L_0,\dots,L_{k-1}$ of the $2k$th subdiagonal disappear,  the elements $d_{2k},\dots,d_{3k-1}$  of the main diagonal change to
\Eq{dj1_2}{
d_j^{(1)}=d_j-L_{j-2k}R_{j-2k}/d_{j-2k} \, , \quad \mbox{for }j=2k,\dots,3k-1.
}
\medskip

(iv) Finally multiply $B_1^{(L)}B_1^{(l)}A_0B_1^{(r)}$ from the right by  $B_1^{(R)}\in \mathcal{M}_{n+1}$ with entries
\Eq{B_1^{(R)}}{
b_{ij}^{(1,R)}:=\left\{\begin{array}{ll}1 &\hbox{if } i=j,\\
-R_i/d_i&\hbox{if } i=0,\dots,k-1;j=i+2k,\\
0 &\hbox{otherwise}
\end{array}\right.
}
The effect of this on the matrix $B_1^{(L)}B_1^{(l)}A_0B_1^{(r)}$ is multiplication of its columns $0,\dots,k-1$  by $-R_0/d_0,\dots,-R_{k-1}/d_{k-1}$ and adding these products  to the columns $2k,\dots,3k-1$ respectively. The  elements $R_0,\dots,R_{k-1}$ vanish and  the  main diagonal does not change.
\medskip

With this the first process ended. The matrices $B_1^{(L)},B_1^{(l)},B_1^{(r)},B_1^{(R)}$ are the same as in \cite{dFL, dFL2} however the transformation rules for the entries are different. In those papers during the first (and subsequent) processes the diagonal vectors $\bf{l},\bf{r}$ just shortened while the diagonal vectors $\bf{L},\bf{R}$ transformed. Here just the opposite happened: the diagonals $\bf{L},\bf{R}$ shortened and $\bf{l},\bf{r}$ transformed.

The results of the first process are summarized in
\begin{Thm} Suppose that $1\le k\le 2k\le n,$ $k<(n+1)/3$. Then
\Eq{bigo1}{
A_1:=B_1^{(L)}B_1^{(l)}A_0B_1^{(r)}B_1^{(R)}=E_1\bigoplus A_1^*
}
where $E_1\in \mathcal{M}_{k}$ is a diagonal matrix with diagonal elements $d_0,\dots,d_{k-1}$ and
\vspace*{-2mm}
\Eq{*}{
A_1^*=A_{n+1-k,k,2k}(\bf L^{(1)},l^{(1)},d^{(1)},r^{(1)},R^{(1)}).
}
\vspace*{-1mm}
is a $k,2k$-pentadiagonal matrix with  main and other diagonal vectors  (called  first iterated diagonals)
\Eq{*}{
\begin{array}{lll}
&{\bf L}^{(1)}=(L_{k},\dots,L_{n-2k})\in\mathbb{R}^{n+1-3k}\quad
&{\bf R}^{(1)}=(R_{k},\dots,R_{n-2k})\in\mathbb{R}^{n+1-3k}\\
&\,\,{\bf l}^{(1)}=(l_{k}^{(1)},\dots,l_{n-k}^{(1)})\in\mathbb{R}^{n+1-2k}\quad &\,\,{\bf r}^{(1)}=(r_{k}^{(1)},\dots,r_{n-k}^{(1)})\in\mathbb{R}^{n+1-2k}\\
&{\bf d}^{(1)}=(d_{k}^{(1)},\dots,d_{n}^{(1)})\in\mathbb{R}^{n+1-k}&
\end{array}
}
where
\vspace*{-1mm}
\Eq{iterlr}{
\begin{array}{rl}
&l_j^{(1)}=\left\{\!\!\begin{array}{lll}
&l_j-L_{j-k}r_{j-k}/d_{j-k}\quad&(j=k,\dots,2k-1),\\
&l_j\quad&(j=2k,\dots,n-k)
\end{array}\right.\\\\
&r_j^{(1)}=\left\{\!\!\begin{array}{lll}
&r_{j}-R_{j-k}l_{j-k}/d_{j-k}\quad&(j=k,\dots,2k-1)\\
&r_j\quad&(j=2k,\dots,n-k)
\end{array}\right.
\end{array}
}
and
\vspace*{-1mm}
\Eq{iterd}{
\hspace*{9mm} d_j^{(1)}=\left\{\!\!\begin{array}{lll}
&d_j-l_{j-k}r_{j-k}/d_{j-k} \quad&(j=k,\dots,2k-1),\\
&d_j-L_{j-2k}R_{j-2k}/d_{j-2k}&(j=2k,\dots,3k-1),\\
&d_j &(j=3k,\dots,n),\\
\end{array}\right.
\vspace*{-1mm}
}
\end{Thm}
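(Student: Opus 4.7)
The approach is straightforward bookkeeping: paragraphs (i)--(iv) preceding the statement already describe the effect of each of the four multiplications, so the proof consists in verifying those calculations carefully and confirming that the resulting pattern of zeros and updated entries matches the claimed block form $E_1\oplus A_1^*$.

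I plan to proceed step by step. First, I verify that $B_1^{(l)}A_0$ is obtained from $A_0$ by adding $-l_{j-k}/d_{j-k}$ times row $j-k$ to row $j$ for $j=k,\dots,2k-1$. Since row $j-k$ of $A_0$ has nonzero entries only at columns $j-k,\,j,\,j+k$ (with values $d_{j-k},r_{j-k},R_{j-k}$, the absence of $L,l$ terms following from $j-k<k$), this zeros $l_{j-k}$, produces the updates $d_j^{(1)}=d_j-l_{j-k}r_{j-k}/d_{j-k}$ and $r_j^{(1)}=r_j-R_{j-k}l_{j-k}/d_{j-k}$, and leaves $R_{j-k}$ intact. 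Second, right multiplication by $B_1^{(r)}$ adds $-r_i/d_i$ times column $i$ to column $i+k$ for $i=0,\dots,k-1$; in $B_1^{(l)}A_0$, column $i$ has only two nonzero entries, at rows $i$ (value $d_i$) and $i+2k$ (value $L_i$), because the $l_i$-entry at row $i+k$ was just eliminated in step (i). Hence this zeros $r_i$ in row $i$, produces $l_{i+k}-L_ir_i/d_i$ in row $i+2k$, and touches nothing else. Third, left multiplication by $B_1^{(L)}$ adds $-L_{j-2k}/d_{j-2k}$ times row $j-2k$ to row $j$ for $j=2k,\dots,3k-1$; here the hypothesis $k<(n+1)/3$ guarantees $3k\le n$, so these rows exist. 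At this stage row $j-2k$ has nonzero entries only at columns $j-2k$ and $j$ (the $r_{j-2k}$-entry at column $j-k$ having just been zeroed in step (ii)), so we zero $L_{j-2k}$ and update $d_j^{(1)}=d_j-L_{j-2k}R_{j-2k}/d_{j-2k}$ without altering any other entry. Fourth, right multiplication by $B_1^{(R)}$ adds $-R_i/d_i$ times column $i$ to column $i+2k$; column $i$ now retains only the diagonal entry $d_i$ (the $l$- and $L$-entries in column $i$ having been cleared in steps (i) and (iii)), so this simply zeros $R_i$ and nothing else changes.

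Collecting the results, each of the first $k$ rows and first $k$ columns of $A_1:=B_1^{(L)}B_1^{(l)}A_0B_1^{(r)}B_1^{(R)}$ carries only its diagonal entry $d_0,\dots,d_{k-1}$, yielding the $k\times k$ diagonal block $E_1$. The lower-right block, on rows and columns $k,\dots,n$, is pentadiagonal with the same offset pattern as $A_0$; its main diagonal and its $k$- and $2k$-sub/super\-diagonals are composed of the updated values \eq{iterd} and \eq{iterlr} together with the unchanged entries $L_k,\dots,L_{n-2k}$ and $R_k,\dots,R_{n-2k}$ (from the rows/columns that none of the four elementary matrices touched), so this block coincides with $A_1^*=A_{n+1-k,k,2k}({\bf L}^{(1)},{\bf l}^{(1)},{\bf d}^{(1)},{\bf r}^{(1)},{\bf R}^{(1)})$.

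The main subtlety lies in sequencing: each later elimination has to use rows or columns that have already been partially cleared by a previous one, otherwise extra terms would be generated. For instance, the elimination of $L_{j-2k}$ in step (iii) works so cleanly only because the $r_{j-2k}$-entry in row $j-2k$ was already cleared by step (ii); similarly the final step (iv) is effortless only because the $l_i$- and $L_i$-entries in column $i$ were cleared in steps (i) and (iii). The hypothesis $k<(n+1)/3$ is precisely what makes room for four disjoint index bands $0,\dots,k-1$; $k,\dots,2k-1$; $2k,\dots,3k-1$; $3k,\dots,n$, so that these transformations can all be carried out and remain compatible.
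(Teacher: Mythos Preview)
Your proposal is correct and follows the same approach as the paper: the paper's own proof is a one-line reference back to the explicit computations in paragraphs (i)--(iv), and you have simply carried out that bookkeeping in full, checking each row/column operation and tracking which entries are zeroed and which are updated. Your added remark on sequencing---that each later elimination relies on an entry already cleared by an earlier one---makes explicit a point the paper leaves implicit, and your index analysis in step (ii) (identifying the modified subdiagonal entries as $l_{i+k}$ for $i=0,\dots,k-1$, i.e.\ $l_k,\dots,l_{2k-1}$) is in fact more accurate than the range $j=2k,\dots,3k-1$ written in the paper's equation \eq{lj1}, which is a slip corrected only in the theorem statement itself.
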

\begin{proof} From the description of the four matrix multiplication  \eq{bigo1} follows while \eq{rj1}, \eq{lj1} show the correctness of \eq{iterlr} and \eq{dj1}, \eq{dj1_2} show that \eq{iterd} is valid.
\end{proof}
For the convenience  of later calculations all letters $l_i,d_i,r_i$ referring to the original matrix will be labeled with superscripts $^{(0)}$ but occasionally we omit this label. We omit from this labeling $L_i,R_i$ since  these numbers did not change during our process.

Define for $s=1,2,\dots,q-1$ the  matrices of $\mathcal{M}_{n+1}$ (for $s=1$ these coincide with the matrices \eq{B_1^{(l)}}, \eq{B_1^{(r)}}, \eq{B_1^{(L)}}, \eq{B_1^{(R)}}) which we use in further calculations by
\Eq{*}{
\begin{array}{ll}
B_s^{(l)}=\(b_{ij}^{(s,l)}\)&=\left\{\begin{array}{ll}1 &\hbox{if } i=j,\\
-l_j^{(s-1)}/d_j^{(s-1)}&\hbox{if } i=sk,\dots, (s+1)k-1;j=i-k,\\
0 &\hbox{otherwise,}
\end{array}\right.\\\\
B_s^{(r)}=\(b_{ij}^{(s,r)}\)&=\left\{\begin{array}{ll}1 &\hbox{if } i=j,\\
-r_i^{(s-1)}/d_i^{(s-1)}&\hbox{if } i=(s-1)k,\dots, sk-1;j=i+k,\\
0 &\hbox{otherwise,}
\end{array}\right.\\\\
B_s^{(L)}=\(b_{ij}^{(s,L)}\)&=\left\{\begin{array}{ll}1 &\hbox{if } i=j,\\
-L_j^{(s-1)}/d_j^{(s-1)}&\hbox{if } i=(s+1)k,\dots,(s+2)k-1;j=i-2k,\\
0 &\hbox{otherwise,}
\end{array}\right.\\\\
B_s^{(R)}=\(b_{ij}^{(s,R)}\)&=\left\{\begin{array}{ll}1 &\hbox{if } i=j,\\
-R_i^{(s-1)}/d_i^{(s-1)}&\hbox{if } i=(s-1)k,\dots,sk-1;j=i+2k,\\
0 &\hbox{otherwise.}
\end{array}\right.
\end{array}.
}
We define the first and second  matrices for $s=q$ too, but in this case the index sets are restricted to $i=qk,\dots,n;j=i-k$ and $i=(q-1)k,\dots,n-k;j=i+k$ respectively, if $p=0$ then both sets are empty and the first and second matrices are degenerate to unit matrices.
Similarly for the third and fourth matrices with $s=q-1$ the index domains are $i=qk,\dots,n;j=i-2k$ and $i=(q-2)k,\dots,n-2k;j=i+2k$ respectively and if $p=0$ then both sets are empty and the third and fourth matrices are degenerate to unit matrices.

In the above definitions the quantities $l_j^{(s-1)}, r_j^{(s-1)}, d_j^{(s-1)}$ can be obtained inductively by continuing the iterations of \eq{iterlr}, \eq{iterd} for $s=1, 2,\dots,(q-2,q-1,q)$  as follows
\vspace*{-1mm}
\Eq{iterlrs}{
\begin{array}{rl}
&l_j^{(s)}=\left\{\!\!\begin{array}{lll}
&l_j^{(s-1)}-L_{j-k}r_{j-k}^{(s-1)}/d_{j-k}^{(s-1)}\quad&(j=sk,\dots,(s+1)k-1),\\
&l_j\quad&(j=(s+1)k,\dots,n-k)
\end{array}\right.\\\\
&r_j^{(s)}=\left\{\!\!\begin{array}{lll}
&r_j^{(s-1)}-R_{j-k}l_{j-k}^{(s-1)}/d_{j-k}^{(s-1)}\quad&(j=sk,\dots,(s+1)k-1),\\
&r_j\quad&(j=(s+1)k,\dots,n-k)
\end{array}\right.
\end{array}
}
and
\vspace*{-1mm}
\Eq{iterds}{
\hspace*{14mm}d_j^{(s)}=\left\{\!\!\begin{array}{lll}
&d_j^{(s-1)}-l_{j-k}^{(s-1)}r_{j-k}^{(s-1)}/d_{j-k}^{(s-1)} \quad&(j=sk,\dots,(s+1)k-1),\\
&d_j^{(s-1)}-L_{j-2k}R_{j-2k}/d_{j-2k}^{(s-1)}&(j=(s+1)k,\dots,(s+2)k-1),\\
&d_j &(j=(s+2)k,\dots,n),\\
\end{array}\right.
\vspace*{-1mm}
}
These definitions are valid if $s< q-2.$ We put the last three  values $q-2,q-1,q$ of $s$ in parenthesis since for them the definitions should be modified. For these values of $s$ the index groups in  \eq{iterlrs}, \eq{iterds} may run out of their ranges $j=sk,\dots,n-k$ and $j=sk,\dots,n$ therefore cannot be defined, or the index groups may be restricted.

\noindent For example    \eq{iterds} is valid for  $s\le q-2$, for  $s=q-1$ the second index group  should be restricted to $j=qk,\dots, n$ if $p>0$ and empty if $p=0$ with empty third index group. For $s=q$ the diagonal elements $d_j^{(q)}$ can be defined only if $p>0$, in this case for $s=q$ in \eq{iterds}  the first index group should be restricted to $j=qk,\dots, n$ with empty second and third index groups.
\medskip

In the second process we calculate the product $A_2:=B_2^{(L)}B_2^{(l)}A_1B_2^{(r)}B_2^{(R)}$
then we continue similarly $s-2 \, (s<q-1)$ times to get
\vspace*{-1mm}
\Eq{*}{
A_{s}:=\(\prod_{j=1}^{j=s}B_{s+1-j}^{(L)}B_{s+1-j}^{(l)}\)\,A_0\,
\(\prod_{j=1}^{j=s}B_j^{(r)}B_j^{(R)}\).
\vspace*{-1mm}
}
For $A_{s}$ we have the decomposition
\Eq{*}{
A_{s}=E_{sk}\bigoplus A_{s}^*
}
where $E_{sk}\in \mathcal{M}_{sk}$ is a diagonal matrix with diagonal elements
\Eq{*}{
d_0,\dots,d_{k-1},d_{k}^{(1)},\dots,d_{2k-1}^{(1)},\dots,
d_{(s-1)k}^{(s-1)},\dots,d_{sk-1}^{(s-1)}
}
and
\vspace*{-1mm}
\Eq{*}{
A_{s}^*=A_{n+1-sk,k,2k}({\bf L}^{(s)},{\bf l}^{(s)},{\bf d}^{(s)},{\bf r}^{(s)},{\bf R}^{(s)})
}
with
\Eq{*}{
\begin{array}{lll}
&{\bf L}^{(s)}=(L_{sk},\dots,L_{n-2k})\in\mathbb{R}^{n+1-(s+2)k}\quad
&{\bf R}^{(s)}=(R_{sk},\dots,R_{n-2k})\in\mathbb{R}^{n+1-(s+2)k}\\
&\,\,{\bf l}^{(s)}=(l_{sk}^{(s-1)},\dots,l_{n-k}^{(s-1)})\in\mathbb{R}^{n+1-(s+1)k}\quad &\,\,
{\bf r}^{(s)}=(r_{sk}^{(s-1)},\dots,r_{n-k}^{(s-1)})\in\mathbb{R}^{n+1-(s+1)k}\\
&{\bf d}^{(s)}=(d_{sk}^{(s-1)},\dots,d_{n}^{(s-1)})\in\mathbb{R}^{n+1-sk}&
\end{array}
}
and the elements of these iterated diagonals are given by \eq{iterlrs}, \eq{iterds}.
\medskip

For $s=q-2$ the dimension of the vectors ${\bf L}^{(s)}, {\bf R}^{(s)}$ is $p.$ This means that if $p=0$ then $A_{q-2}^*$ is tridiagonal, and if $p>0$ then $A_{q-1}^*$ is tridiagonal. Therefore the cases $p=0$ and $p>0$ should be treated differently.

\begin{Thm} Let $1\le k< 2k\le n,$ $n+1=kq$ (i.e. $p=0$) $q>3$. Then
\Eq{*}{
A_{q-1}:=B_{q-2}^{(l)}A_{q-2}B_{q-2}^{(r)}=E_{qk}\in \mathcal{M}_{n+1}
}
is a diagonal matrix with diagonal elements
\Eq{*}{
d_0,\dots,d_{k-1},d_{k}^{(1)},\dots,d_{2k-1}^{(1)},\dots,
d_{(q-1)k}^{(q-1)},\dots,d_{qk-1}^{(q-1)}
}
\vspace*{-1mm}
defined by \eq{iterds}.
\end{Thm}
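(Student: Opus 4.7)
The plan is to exploit the fact that when $p=0$, the block decomposition of $A_{q-2}$ obtained by iterating Theorem~2 reduces $A_{q-2}^*$ to a much simpler form. Specifically, for $s=q-2$ the ranges $n+1-(s+2)k = n+1-qk = 0$ force the vectors ${\bf L}^{(q-2)}$ and ${\bf R}^{(q-2)}$ to be empty, so $A_{q-2}^*$ is actually a $k$-tridiagonal matrix of size $2k$, whose only non-zero off-diagonal entries are the $k$ subdiagonal entries $l_{(q-2)k}^{(q-2)},\dots,l_{(q-1)k-1}^{(q-2)}$ and the $k$ superdiagonal entries $r_{(q-2)k}^{(q-2)},\dots,r_{(q-1)k-1}^{(q-2)}$. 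Because the ${\bf L}$, ${\bf R}$ diagonals have vanished, the matrices $B_{q-1}^{(L)}$ and $B_{q-1}^{(R)}$ that would normally appear in the $(q-1)$st process degenerate to identity matrices (their defining index sets are empty), so the entire step reduces to one left multiplication by $B_{q-1}^{(l)}$ followed by one right multiplication by $B_{q-1}^{(r)}$.

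With the decomposition $A_{q-2}=E_{(q-2)k}\oplus A_{q-2}^*$ in hand, I would then mimic steps (i)--(ii) of the first process but now at index $s=q-1$. Left multiplication by $B_{q-1}^{(l)}$ adds $-l_j^{(q-2)}/d_j^{(q-2)}$ times row $j$ to row $j+k$ for $j=(q-2)k,\dots,(q-1)k-1$; this zeroes the subdiagonal entries of $A_{q-2}^*$ and updates the lower-block diagonal entries to
\Eq{*}{
d_j^{(q-1)}=d_j^{(q-2)}-l_{j-k}^{(q-2)}r_{j-k}^{(q-2)}/d_{j-k}^{(q-2)},\qquad j=(q-1)k,\dots,qk-1,
}
precisely the first branch of \eq{iterds} at $s=q-1$. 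The second and third branches of \eq{iterds} do not intervene: their index ranges lie beyond $n=qk-1$, and even in principle no update could occur because the ${\bf L}$, ${\bf R}$ diagonals are empty. Right multiplication by $B_{q-1}^{(r)}$ then zeroes the $k$ remaining superdiagonal entries, and since the subdiagonal in the relevant columns has already been killed, no further change to the main diagonal can occur.

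Finally I would combine the unchanged diagonal block $E_{(q-2)k}$ (whose entries are $d_0,\dots,d_{(q-3)k-1}^{(q-3)}$ from the previous iterations) with the freshly produced diagonal entries $d_{(q-1)k}^{(q-1)},\dots,d_{qk-1}^{(q-1)}$ and with the untouched entries $d_{(q-2)k}^{(q-2)},\dots,d_{(q-1)k-1}^{(q-2)}$ to conclude that $A_{q-1}$ is the $(n+1)\times(n+1)$ diagonal matrix $E_{qk}$ listed in the statement. The only real obstacle is bookkeeping: one must check carefully that the left multiplication does not create any phantom entries in the already-diagonalized upper block (it does not, since the relevant rows of $E_{(q-2)k}$ are acted on trivially) and that the intermediate diagonal entries $d_j^{(q-2)}$ in the middle range are indeed stable under this last step (they are, because $B_{q-1}^{(l)}$ and $B_{q-1}^{(r)}$ touch only rows/columns of indices $\ge (q-2)k$ in a way that leaves those diagonal positions fixed). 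Once this is verified, the result follows with no further calculation.
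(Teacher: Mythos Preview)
Your approach is correct and mirrors the paper's implicit argument: the paper gives no separate proof of this theorem but relies on the preceding discussion (which you accurately reproduce) that when $p=0$ the $\mathbf{L},\mathbf{R}$ diagonals of $A_{q-2}^*$ are empty, so $B_{q-1}^{(L)},B_{q-1}^{(R)}$ degenerate to identities and a single elimination step with $B_{q-1}^{(l)},B_{q-1}^{(r)}$ diagonalizes the remaining $2k\times 2k$ tridiagonal block. Note that the subscripts $q-2$ on the $B$-matrices in the displayed statement appear to be a misprint for $q-1$ (as you tacitly assume), and your shorthand for the entries of $E_{(q-2)k}$ should end at $d_{(q-2)k-1}^{(q-3)}$ rather than $d_{(q-3)k-1}^{(q-3)}$.
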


If $p>0$ then multiplying $A_{q-1}$ from the left by $B_q^{(l)}$ and the from the right by $B_q^{(r)}$   we get a diagonal matrix.
\begin{Thm} Let $1\le k< 2k\le n,$ $n+1=kq+p, \,0\le p<k,$  $q\ge 3$. Then
\Eq{*}{
A_q:=B_q^{(l)}A_{q-1}B_q^{(r)}=E_{qk+p}\in \mathcal{M}_{n+1}
}
is a diagonal matrix with diagonal elements
\Eq{*}{
d_0,\dots,d_{k-1},d_{k}^{(1)},\dots,d_{2k-1}^{(1)},\dots,
d_{(q-1)k}^{(q-1)},\dots,d_{qk-1}^{(q-1)},
d_{qk}^{(q)},\dots,d_{qk+p-1}^{(q)}
}
\vspace*{-1mm}
defined by \eq{iterds}.
\end{Thm}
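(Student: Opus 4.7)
My plan is to prove Theorem 4 as a direct capstone of the iterative reduction already developed. First I would invoke the construction through step $s=q-1$: applying the four matrix multiplications by $B^{(L)}, B^{(l)}, B^{(r)}, B^{(R)}$-type matrices (with the restricted index ranges described in the text) yields
\[
A_{q-1} = E_{(q-1)k}\bigoplus A_{q-1}^*,
\]
with $A_{q-1}^*$ of order $n+1-(q-1)k = k+p$. Because the nominal length of ${\bf L}^{(q-1)}$ and ${\bf R}^{(q-1)}$ is $n+1-(q+1)k = p-k<0$, these vectors are empty, so $A_{q-1}^*$ has no $2k$-diagonals; it is therefore $k$-tridiagonal with main diagonal $d_{(q-1)k}^{(q-1)},\dots,d_n^{(q-1)}$ and with exactly $p$ nonzero entries on each of the $l$- and $r$-subdiagonals, namely $l_j^{(q-1)}, r_j^{(q-1)}$ for $j=(q-1)k,\dots,(q-1)k+p-1$.

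Next I would verify that the single additional step $A_q:=B_q^{(l)}A_{q-1}B_q^{(r)}$ clears these remaining entries. The off-diagonal support of both $B_q^{(l)}$ (rows $qk,\dots,qk+p-1$, each with a single nonzero at column $i-k$) and $B_q^{(r)}$ (rows $(q-1)k,\dots,(q-1)k+p-1$, each with a single nonzero at column $i+k$) sits inside the block of $A_{q-1}^*$, so the $E_{(q-1)k}$ block is untouched. Within the active block, the row operation $\mathrm{row}_i\leftarrow\mathrm{row}_i-\bigl(l_{i-k}^{(q-1)}/d_{i-k}^{(q-1)}\bigr)\mathrm{row}_{i-k}$ for $i=qk,\dots,qk+p-1$ annihilates $l_{i-k}^{(q-1)}$; since row $i-k$ has nonzero entries only at columns $i-k$ (value $d_{i-k}^{(q-1)}$) and $i$ (value $r_{i-k}^{(q-1)}$), the diagonal entry becomes
\[
d_i^{(q-1)}-\frac{l_{i-k}^{(q-1)}r_{i-k}^{(q-1)}}{d_{i-k}^{(q-1)}}=d_i^{(q)},
\]
in agreement with \eq{iterds} at $s=q$. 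The subsequent column operation then zeros the $r$-entries without disturbing the diagonal, because the $l$-entries below the diagonal have already been cleared.

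The hard part will be the bookkeeping of indices: one must confirm that the restricted domain of $B_q^{(l)}$ (resp.\ $B_q^{(r)}$) coincides precisely with the set where $A_{q-1}^*$ carries a nonzero $l$-entry (resp.\ $r$-entry), and that no diagonal index outside $j=qk,\dots,qk+p-1$ is altered in this last step. Once this alignment is pinned down, reading off the diagonal of $A_q$ gives exactly the sequence $d_0,\dots,d_{k-1}, d_k^{(1)},\dots,d_{2k-1}^{(1)},\dots,d_{(q-1)k}^{(q-1)},\dots,d_{qk-1}^{(q-1)},d_{qk}^{(q)},\dots,d_{qk+p-1}^{(q)}$ stated in the theorem. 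The degenerate case $p=0$ is covered automatically, since then $B_q^{(l)}$ and $B_q^{(r)}$ collapse to the identity and the conclusion reduces to Theorem 3.
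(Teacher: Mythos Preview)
Your proposal is correct and follows essentially the same line as the paper. The paper itself gives no standalone proof of this theorem; it is presented as the immediate consequence of the iterative construction described before Theorem~3 (noting that $A_{q-1}^*$ is tridiagonal when $p>0$) together with the one-line remark that multiplying $A_{q-1}$ on the left by $B_q^{(l)}$ and on the right by $B_q^{(r)}$ yields a diagonal matrix. You have simply made this reasoning explicit, supplying the index bookkeeping that the paper leaves to the reader.
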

For the determinant we have
\begin{Thm} Let $1\le k< 2k\le n,$ $n+1=kq+p, \,0\le p<k,$  and suppose that either $q=3,p>0$, or $q>3$. Then  the determinant of $A_{n+1,k,2k}(\bf L, l, d, r, R)$ is
\Eq{detA0}{
D_{n+1,k,2k}({\bf L, l, d, r, R})&=\prod\limits_{j=0}^{k-1}d_{j}^{(0)}d_{j+k}^{(1)}\cdots d_{j+(q-1)k}^{(q-1)}\prod\limits_{j=0}^{p-1}d_{j+qk}^{(q)}\\
&=\prod\limits_{j=0}^{p-1}d_{j}^{(0)}d_{j+k}^{(1)}\cdots d_{j+qk}^{(q)}\prod\limits_{j=p}^{k-1}d_{j}^{(0)}d_{j+k}^{(1)}
\cdots d_{j+(q-1)k}^{(q-1)}
}
where $d_{j}^{(s)}$ $(s=1,\dots,q;j=sk,\dots,n)$ are defined by \eq{iterd}, \eq{iterds}  and  $\prod\limits_{j=0}^{-1}:=1$.
\end{Thm}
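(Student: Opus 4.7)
The plan is to leverage the factorizations already constructed in Theorems 3 and 4 and reduce the computation of $\det A_0$ to evaluating the determinant of an explicit diagonal matrix. The first step I would carry out is to verify that each of the transformation matrices $B_s^{(L)}, B_s^{(l)}, B_s^{(r)}, B_s^{(R)}$ is unit triangular: the diagonal entries are all $1$, and the remaining nonzero entries sit strictly below the diagonal (for $B_s^{(l)}$ and $B_s^{(L)}$, since there $j = i - k$ or $j = i - 2k$) or strictly above it (for $B_s^{(r)}$ and $B_s^{(R)}$, since there $j = i + k$ or $j = i + 2k$). Each such matrix therefore has determinant $1$, and multiplicativity of the determinant yields
\[
\det A_0 = \det A_{q-1} \quad \text{when } p = 0, \qquad \det A_0 = \det A_q \quad \text{when } p > 0.
\]

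Next I would apply Theorem 3 (for $p=0$, $q>3$) or Theorem 4 (for $p>0$, $q\ge 3$) to identify the right-hand side as the determinant of the diagonal matrix whose entries are precisely the iterated quantities $d_j^{(s)}$ listed there. Since the determinant of a diagonal matrix is the product of its diagonal entries, this already gives a product expression; to match the first form in \eq{detA0} I would group these entries by the residue class of the position modulo $k$. Each residue $j \in \{0, 1, \ldots, k-1\}$ contributes one factor $d_{j+sk}^{(s)}$ at every stage $s = 0, 1, \ldots, q-1$, and when $p > 0$ the residues $j \in \{0, \ldots, p-1\}$ contribute one additional factor $d_{j+qk}^{(q)}$ from the final stage. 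The second form in \eq{detA0} is then obtained by absorbing the $p$ extra factors into the first $p$ chains, extending those chains to length $q+1$ and leaving the remaining $k-p$ chains of length $q$; the convention $\prod_{j=0}^{-1} := 1$ makes both expressions agree when $p=0$.

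The only part requiring genuine care is the index bookkeeping, namely verifying that the diagonal entries listed in Theorems 3 and 4 do partition cleanly into the claimed chains $d_j^{(0)} d_{j+k}^{(1)} \cdots d_{j+(q-1)k}^{(q-1)}$, and that the recursions \eq{iterd} and \eq{iterds} indeed produce each entry in its prescribed index range. A useful sanity check will be the factor count $kq + p = n+1$, which both displayed products must respect. Beyond this routine indexing, no new idea is required: all the analytic content already sits in Theorems 2--4.
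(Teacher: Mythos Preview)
Your proposal is correct and matches the paper's approach exactly: the paper does not write out a separate proof for this theorem, because it is an immediate consequence of Theorems~3 and~4 together with the observation that each $B_s^{(l)}, B_s^{(r)}, B_s^{(L)}, B_s^{(R)}$ is unit (upper or lower) triangular and hence has determinant $1$. Your write-up simply makes explicit the index regrouping by residue class modulo $k$ that the paper leaves tacit.
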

To express \eq{detA0} in terms of the entries of $A_0$ is quite complicated if $q$ is large. Next we do this calculation if  $q=3$.

For $j=0,\dots,k-1$ we have
\Eq{*}{
d_{j}^{(0)}d_{j+k}^{(1)}=d_{j}\(d_{j+k}-\frac{l_jr_j}{d_j}\)=d_{j}d_{j+k}-l_jr_j
}
and
\Eq{*}{
d_{j+2k}^{(2)}&=d_{j+2k}^{(1)}-\frac{l_{j+k}^{(1)}r_{j+k}^{(1)}}{d_{j+k}^{(1)}}=
d_{j+2k}-\frac{L_jR_j}{d_j}-\frac{\(l_{j+k}-\frac{L_jr_j}{d_j}\)\(r_{j+k}-\frac{R_jl_j}{d_j}\)}
{d_{j+k}-\frac{l_jr_j}{d_j}}\\\\
&=\frac{d_jd_{j+2k}-L_jR_j}{d_j}-\frac{(l_{j+k}d_j-L_jr_j)(r_{j+k}d_j-R_jl_j)}{d_j(d_{j+k}d_j-l_jr_j)}\\\\
&=\frac{(d_jd_{j+2k}-L_jR_j)(d_{j+k}d_j-l_jr_j)-(l_{j+k}d_j-L_jr_j)(r_{j+k}d_j-R_jl_j)}{d_j(d_{j+k}d_j-l_jr_j)}\\\\
&=\frac{d_jd_{j+k}d_{j+2k}-d_{j+2k}l_jr_j-d_{j+k}L_jR_j-d_{j}l_{j+k}r_{j+k}+R_jl_jl_{j+k}
+L_jr_jr_{j+k}
}{d_jd_{j+k}-l_jr_j},
}
\Eq{*}{
d_{j}^{(0)}d_{j+k}^{(1)}d_{j+2k}^{(2)}=
d_jd_{j+k}d_{j+2k}-d_{j+2k}l_jr_j-d_{j+k}L_jR_j-d_{j}l_{j+k}r_{j+k}+R_jl_jl_{j+k}
+L_jr_jr_{j+k}.
}
If $p>0$ then for  $j=0,\dots,p$ we have to calculate $d_{j+3k}^{(3)}$ too.

\Eq{*}{
d_{j+3k}^{(3)}&=d_{j+3k}^{(2)}-\frac{l_{j+2k}^{(2)}r_{j+2k}^{(2)}}{d_{j+2k}^{(2)}}=
\frac{d_{j+3k}^{(2)}d_{j+2k}^{(2)}-l_{j+2k}^{(2)}r_{j+2k}^{(2)}}{d_{j+2k}^{(2)}}
}
where
\Eq{*}{
d_{j+3k}^{(2)}&=d_{j+3k}^{(1)}-\frac{L_{j+k}R_{j+k}}{d_{j+k}^{(1)}}=
d_{j+3k}-\frac{L_{j+k}R_{j+k}}{d_{j+k}-\frac{l_jr_j}{d_j}}=
\frac{d_jd_{j+k}d_{j+3k}-d_{j+3k}l_jr_j-d_{j}L_{j+k}R_{j+k}}{d_jd_{j+k}-l_jr_j}\\\\
l_{j+2k}^{(2)}&=l_{j+2k}^{(1)}-\frac{L_{j+k}r_{j+k}^{(1)}}{d_{j+k}^{(1)}}=
l_{j+2k}-\frac{L_{j+k}\(r_{j+k}-\frac{R_jl_j}{d_j}\)}{d_{j+k}-\frac{l_jr_j}{d_j}}\\\\
&=\frac{d_jd_{j+k}l_{j+2k}-l_{j}l_{j+2k}r_{j}-d_jL_{j+k}r_{j+k}+L_{j+k}l_jR_j}
{d_jd_{j+k}-l_jr_j}\\\\
r_{j+2k}^{(2)}&=r_{j+2k}^{(1)}-\frac{R_{j+k}l_{j+k}^{(1)}}{d_{j+k}^{(1)}}=
r_{j+2k}-\frac{R_{j+k}\(l_{j+k}-\frac{L_jr_j}{d_j}\)}{d_{j+k}-\frac{l_jr_j}{d_j}}\\\\
&=\frac{d_jd_{j+k}r_{j+2k}-r_{j}r_{j+2k}l_{j}-d_jR_{j+k}l_{j+k}+R_{j+k}r_jL_j}
{d_jd_{j+k}-l_jr_j}
}
Denote by $n_1,n_2,n_3,n_4$ the numerators by $m_1,m_2,m_3,m_4$ the  denominators of the above fraction forms of $d_{j+3k}^{(2)},d_{j+2k}^{(2)},$$l_{j+2k}^{(2)},r_{j+2k}^{(2)}$ respectively, then we have that
\Eq{*}{
d_{j+3k}^{(3)}=\frac{n_1n_2-n_3n_4}{(d_{j+k}d_j-l_jr_j)^2}\,
\frac{d_{j+k}d_j-l_jr_j}{n_2}=
\frac{n_1n_2-n_3n_4}{(d_{j+k}d_j-l_jr_j)n_2}.
}
Therefore
\Eq{*}{
d_{j}^{(0)}d_{j+k}^{(1)}d_{j+2k}^{(2)}d_{j+3k}^{(3)}=n_2d_{j+3k}^{(3)}
=\frac{n_1n_2-n_3n_4}{d_{j+k}d_j-l_jr_j}.
}
Calculating and factorizing the numerator by Maple software we obtain
\Eq{*}{
n_1n_2-n_3n_4=(d_{j+k}d_j-l_jr_j)M
}
where
\Eq{*}{
M:&=d_{j}d_{j\!+\!k}d_{j\!+\!2k}d_{j\!+\!3k}\!-\!d_{j}d_{j\!+\!k}l_{j\!+\!2k}r_{j\!+\!2k}
\!-\!d_{j}d_{j\!+\!2k}L_{j\!+\!k}R_{j\!+\!k}\!-\!d_{j}d_{j\!+\!3k}l_{j\!+\!k}r_{j\!+\!k}
\!-\!d_{j\!+\!k}d_{j\!+\!3k}L_jR_j\\\\
&-\!d_{j\!+\!2k}d_{j\!+\!3k}l_{j}r_{j}\!+\!d_{j}L_{j\!+\!k}r_{j\!+\!k}r_{j\!+\!2k}
\!+\!d_{j}R_{j\!+\!k}l_{j\!+\!k}l_{j\!+\!2k}\!+\!d_{j\!+\!3k}L_jr_{j}r_{j\!+\!k}
\!+\!d_{j\!+\!3k}R_jl_{j}l_{j\!+\!k}\\\\
&+\!L_jL_{j\!+\!k}R_jR_{j\!+\!k}\!-\!L_jR_{j\!+\!k}l_{j\!+\!2k}r_{j}
\!-\!L_{j\!+\!k}R_jl_{j}r_{j\!+\!2k}\!+\!l_{j}l_{j\!+\!2k}r_{j}r_{j\!+\!2k},
}
therefore
\Eq{*}{
d_{j}^{(0)}d_{j+k}^{(1)}d_{j+2k}^{(2)}d_{j+3k}^{(3)}=M=M_{n,k,j}({\bf L,l,d,r,R}).
}
Further let
\Eq{*}{
N_{n,k,j}&({\bf L,l,d,r,R}):=n_2\\
&=d_jd_{j+k}d_{j+2k}-d_{j+2k}l_jr_j-d_{j+k}L_jR_j-d_{j}l_{j+k}r_{j+k}+R_jl_jl_{j+k}
+L_jr_jr_{j+k}
}
then  we have
\begin{Thm} Let $1\le k< 2k\le n,$ $n+1=3k+p, \,0\le p<k$ then
\Eq{detq=3}{
D_{n+1,k,2k}({\bf L, l, d, r, R})
&=\prod\limits_{j=0}^{p-1}d_{j}^{(0)}d_{j+k}^{(1)}d_{j+2k}^{(2)}d_{j+3k}^{(3)} \prod\limits_{j=p}^{k-1}d_{j}^{(0)}d_{j+k}^{(1)}d_{j+2k}^{(2)}\\\\
&=\prod\limits_{j=0}^{p-1}M_{n,k,j}({\bf L,l,d,r,R}) \prod\limits_{j=p}^{k-1}N_{n,k,j}({\bf L,l,d,r,R})
}
where $\prod\limits_{j=0}^{-1}:=1$.
\end{Thm}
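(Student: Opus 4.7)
The plan is to apply Theorem 4 in the special case $q=3$ and then to simplify the two resulting products into the expressions involving $M_{n,k,j}$ and $N_{n,k,j}$. The hypothesis $n+1 = 3k+p$ with $0 \le p < k$ forces $q = 3$, so Theorem 4 gives
\[
D_{n+1,k,2k}({\bf L, l, d, r, R}) = \prod_{j=0}^{p-1} d_{j}^{(0)} d_{j+k}^{(1)} d_{j+2k}^{(2)} d_{j+3k}^{(3)} \prod_{j=p}^{k-1} d_{j}^{(0)} d_{j+k}^{(1)} d_{j+2k}^{(2)},
\]
which is the first displayed equality. What remains is to identify each factor in the two products with $M_{n,k,j}$ and $N_{n,k,j}$, respectively; since the paragraphs preceding the statement already contain all of the necessary explicit computations, the proof reduces to assembling those fragments cleanly.

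For the second product, I would iterate the recursions \eq{iterd} and \eq{iterds} with $s = 1, 2$, starting from $d_j^{(0)} = d_j$. This is precisely the chain of elementary substitutions carried out in the text, and its outcome is
\[
d_{j}^{(0)} d_{j+k}^{(1)} d_{j+2k}^{(2)} = N_{n,k,j}({\bf L,l,d,r,R}),
\]
from which $\prod_{j=p}^{k-1} d_j^{(0)} d_{j+k}^{(1)} d_{j+2k}^{(2)} = \prod_{j=p}^{k-1} N_{n,k,j}$ follows at once.

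For the first product one needs the four-fold quantity including $d_{j+3k}^{(3)}$. Applying \eq{iterds} once more expresses $d_{j+3k}^{(3)}$ in terms of the four already-iterated quantities $d_{j+3k}^{(2)}, d_{j+2k}^{(2)}, l_{j+2k}^{(2)}, r_{j+2k}^{(2)}$, each of which can be written as a single fraction with common denominator $d_j d_{j+k} - l_j r_j$. Labeling these fractions as $n_i/m_i$ for $i = 1, 2, 3, 4$, and using the cancellation $d_j^{(0)} d_{j+k}^{(1)} d_{j+2k}^{(2)} = n_2$, one obtains
\[
d_{j}^{(0)} d_{j+k}^{(1)} d_{j+2k}^{(2)} d_{j+3k}^{(3)} = \frac{n_1 n_2 - n_3 n_4}{d_j d_{j+k} - l_j r_j}.
\]

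The only genuinely non-routine step, and the main obstacle, is verifying the polynomial identity
\[
n_1 n_2 - n_3 n_4 = (d_j d_{j+k} - l_j r_j)\, M_{n,k,j}({\bf L, l, d, r, R}).
\]
This simultaneously certifies that $d_j d_{j+k} - l_j r_j$ divides the numerator and identifies the quotient with the explicit polynomial $M_{n,k,j}$. In principle it is checked by expanding both sides and comparing monomials, but the expression involves roughly a dozen variables and on the order of a hundred monomials, so a hand computation is error-prone; the clean route is to delegate the symbolic expansion to a computer algebra system, as the author does with Maple. Granting this identity, the first product collapses to $\prod_{j=0}^{p-1} M_{n,k,j}$, and both displayed equalities of the theorem follow.
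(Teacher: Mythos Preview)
Your proposal is correct and mirrors the paper's own argument: the paper obtains Theorem~6 precisely by specializing the general determinant formula (Theorem~5 in the paper's numbering, not Theorem~4) to $q=3$ and then invoking the explicit computations of $d_{j}^{(0)}d_{j+k}^{(1)}d_{j+2k}^{(2)}$ and $d_{j}^{(0)}d_{j+k}^{(1)}d_{j+2k}^{(2)}d_{j+3k}^{(3)}$ carried out in the text, with the factorization $n_1n_2-n_3n_4=(d_jd_{j+k}-l_jr_j)M$ verified by Maple. One small caveat worth noting: Theorem~5 is stated under the hypothesis ``either $q=3,\,p>0$, or $q>3$'', so the boundary case $p=0$ (i.e.\ $n+1=3k$) is not literally covered by it; that case, however, falls under Theorem~1, whose formula coincides with $\prod_{j=0}^{k-1}N_{n,k,j}$ when $n+1=3k$.
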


\section{$k,2k$-pentadiagonal determinants, the Toeplitz case}
In the imperfect  Toeplitz case in the quantities $M_{n,k,j},N_{n,k,j}$ of \eq{detq=3} we have to substitute $L_j=L,l_j=l,r_j=r,R_j=R$ and
\Eq{*}{
&\hbox{in $M_{n,k,j}$ for $j=0,\dots,p-1$ substitute }d_j=d-\alpha, d_{j+k}=d,d_{j+2k}=d,d_{j+3k}=d-\beta,\\
&\hbox{in $N_{n,k,j}$ for $j=p,\dots,k-1$ substitute }d_j=d-\alpha, d_{j+k}=d,d_{j+2k}=d-\beta.
}
We obtain
\begin{Thm} Let $1\le k< 2k\le n,$ $n+1=3k+p, \,0\le p<k$ then the determinant of
the imperfect pentadiagonal Toeplitz matrix $A_{n+1,k,2k}^{(\alpha,\beta)}( L, l, d, r, R)$ is
\Eq{*}{
\begin{array}{rl}
D_{n+1,k,2k}^{(\alpha,\beta)}&=\big(d^4\!-\!(\alpha+\beta)d^3-(3lr\!+\!2LR-\alpha\beta)d^2\!
+\!(2Lr^2\!+\!2Rl^2\!+\!(\alpha+\beta)(2lr\!+\!LR))d \\
&\!+\!L^2R^2\!-\!2LRlr\!+\!l^2r^2\!+\!(\alpha+\beta)(Lr^2\!+\!Rl^2)-\alpha\beta lr\big)^{p}\\
&\cdot\big(d^3\!-\!(\alpha+\beta)d^2\!-\!(2lr\!+\!LR-\alpha\beta)d\!
+\!Lr^2\!+\!Rl^2\!+\!\alpha\beta lr\big)^{k\!-\!p}
\end{array}
}
\end{Thm}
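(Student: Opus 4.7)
The strategy is to invoke Theorem 6, which for $n+1=3k+p$, $0\le p<k$, already gives
\Eq{*}{
D_{n+1,k,2k}({\bf L,l,d,r,R})=\prod_{j=0}^{p-1}M_{n,k,j}({\bf L,l,d,r,R})\,\prod_{j=p}^{k-1}N_{n,k,j}({\bf L,l,d,r,R}),
}
so the proof reduces to evaluating the polynomials $M_{n,k,j}$ and $N_{n,k,j}$ on the imperfect Toeplitz data and observing that, in each of the two index ranges, the value is independent of $j$. Once this is done, the two products collapse to $M^{p}$ and $N^{k-p}$, which are the two factors appearing in the statement.

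The only combinatorial bookkeeping needed is to check which block of the main diagonal each index $j+sk$ falls in. The diagonal has three blocks of lengths $k$, $n+1-2k=k+p$, $k$ carrying the values $d-\alpha$, $d$, $d-\beta$ respectively. For $j\in\{0,\dots,p-1\}$ the indices $j,\,j+k,\,j+2k,\,j+3k$ lie in $[0,p)$, $[k,k+p)$, $[2k,2k+p)$, $[3k,3k+p)$, and therefore take the values $d-\alpha,d,d,d-\beta$; for $j\in\{p,\dots,k-1\}$ the indices $j,\,j+k,\,j+2k$ lie in $[p,k)$, $[k+p,2k)$, $[2k+p,3k)$, taking $d-\alpha,d,d-\beta$. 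These are exactly the substitution rules written just before the theorem, and, crucially, they do not depend on the particular $j$ inside the range. Since the off-diagonal entries are already constant, this shows at once that $M_{n,k,j}$ and $N_{n,k,j}$ depend on $j$ only through data that is itself constant on each of the two ranges.

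What remains is a direct algebraic expansion. Inserting the constants into the six-term definition of $N_{n,k,j}$ and collecting powers of $d$ yields the cubic factor displayed in the statement. Inserting into the fourteen-term expression for $M_{n,k,j}$, using $d_{j}d_{j+3k}=d^{2}-(\alpha+\beta)d+\alpha\beta$ together with $d_{j+k}=d_{j+2k}=d$, and gathering by powers of $d$ produces the quartic factor. The one place where an arithmetic slip is easy is in the degree-$0$ and degree-$1$ coefficients of $M$, where the four contributions $d_{j}L_{j+k}r_{j+k}r_{j+2k}$, $d_{j}R_{j+k}l_{j+k}l_{j+2k}$, $d_{j+3k}L_{j}r_{j}r_{j+k}$, $d_{j+3k}R_{j}l_{j}l_{j+k}$ combine with the purely off-diagonal block $L^{2}R^{2}-2LRlr+l^{2}r^{2}$ and with the terms coming from $d_{j}d_{j+3k}$; since the paper already used Maple when deriving $M$ itself from $n_{1}n_{2}-n_{3}n_{4}$, the same computer-algebra verification is the natural safeguard here. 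No new idea beyond Theorem 6 is required.
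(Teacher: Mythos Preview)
Your proposal is correct and follows exactly the paper's own argument: the paper proves Theorem~7 by taking the factorization of Theorem~6, substituting the imperfect Toeplitz data $L_j=L,\,l_j=l,\,r_j=r,\,R_j=R$ together with the block values $d_j=d-\alpha,\,d_{j+k}=d,\,d_{j+2k}=d,\,d_{j+3k}=d-\beta$ (respectively $d_j=d-\alpha,\,d_{j+k}=d,\,d_{j+2k}=d-\beta$) into $M_{n,k,j}$ and $N_{n,k,j}$, and reading off the result. Your additional paragraph verifying which diagonal block each index $j+sk$ lands in is a useful fleshing-out of a step the paper leaves implicit, but the overall route is identical.
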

In the  Toeplitz case we immediately get by $\alpha=\beta=0$ from the previous theorem
\begin{Thm} Let $1\le k< 2k\le n,$ $n+1=3k+p, \,0\le p<k$ then the determinant of
$A_{n+1,k,2k}( L, l, d, r, R)$ is
\Eq{*}{
D_{n+1,k,2k}=&\big(d^4\!-\!(3lr\!+\!2LR)d^2\!+\!(2Lr^2\!
+\!2Rl^2)d\!+\!L^2R^2\!-\!2LRlr\!+\!l^2r^2\big)^{p}\\
&\cdot\big(d^3\!-\!(2lr\!+\!LR)d\!+\!Lr^2\!+\!Rl^2\big)^{k\!-\!p}.
}
\end{Thm}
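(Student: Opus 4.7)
The plan is simply to specialize Theorem 6 by setting $\alpha=\beta=0$. The (perfect) pentadiagonal Toeplitz matrix $A_{n+1,k,2k}(L,l,d,r,R)$ coincides with the imperfect version $A_{n+1,k,2k}^{(\alpha,\beta)}(L,l,d,r,R)$ in the special case $\alpha=\beta=0$, because then the ``correction'' to the first and last $k$ main-diagonal entries disappears and the whole main diagonal is the constant $d$. Hence Theorem 6 applies directly and no new computation is needed; only a substitution.

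The concrete step is to plug $\alpha=\beta=0$ into each of the two factors of the formula of Theorem 6 separately. In the quartic factor every monomial carrying $(\alpha+\beta)$ or $\alpha\beta$ drops out, leaving exactly $d^4-(3lr+2LR)d^2+(2Lr^2+2Rl^2)d+L^2R^2-2LRlr+l^2r^2$; in the cubic factor the surviving terms give $d^3-(2lr+LR)d+Lr^2+Rl^2$. Raising these to the powers $p$ and $k-p$ respectively reproduces the stated formula.

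As an independent cross-check I would also derive the result directly from Theorem 5: in the pure Toeplitz regime all entries on a fixed diagonal are equal, so both $M_{n,k,j}({\bf L,l,d,r,R})$ and $N_{n,k,j}({\bf L,l,d,r,R})$ become independent of $j$, and the two products in the formula of Theorem 5 collapse to the pure powers $M^p N^{k-p}$, matching the previous computation. The only care required is the bookkeeping of signs and coefficients when the $\alpha,\beta$-terms are discarded; there is no conceptual obstacle, which is why the statement is phrased as an immediate corollary of Theorem 6.
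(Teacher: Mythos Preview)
Your proposal is correct and matches the paper's own argument exactly: the paper states that the Toeplitz formula follows ``immediately'' from the imperfect Toeplitz theorem by substituting $\alpha=\beta=0$. Note only that your theorem numbering is off by one relative to the paper (what you call Theorems~5 and~6 are the paper's Theorems~6 and~7), but the mathematical content is unchanged.
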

Using a suitable rearrangement of $k,2k$-pentadiagonal (general or Toeplitz) matrices they can be reduced to the direct sum of $1,2$-pentadiagonal (general or Toeplitz) matrices. In this way we can say more about $k,2k$-pentadiagonal  determinants.

Let $n+1=kq+p$ where $0\le p<k$ and consider the  permutation $\sigma$ of the integers $0,1,\dots,n$ given by
\Eq{*}{
\begin{array}{rll}
\sigma(s+j(q+1))&=sk+j \,\, &\hbox{if }s=0,1,\dots,q;j=0,\dots,p-1,\\
\sigma(s+p+jq)&=sk+j \,\,   &\hbox{if }s=0,1,\dots,q-1;j=p,\dots,k-1.
\end{array}
}
Define the permutation matrix $P_\sigma=(p_{ij})$ by
\Eq{*}{
p_{ij}=\left\{\begin{array}{rl}
&1\hbox{ if } j=\sigma(i),\\
              &0\hbox{ otherwise. }
              \end{array}
\right.
}
Then $P_\sigma A_0P_\sigma^T$ rearranges both the rows and columns of $A_0=A_{n+1,k,2k}(\bf L,l,d,r,R)$ in the order of the permutation $\sigma$. Thus we obtain
\Eq{decomp}{
P_\sigma A_0P_\sigma^T=\(\bigoplus\limits_{j=0}^{p-1}A^{(j)}_{q+1,1,2}\)
\(\bigoplus\limits_{j=p}^{k-1}A^{(j)}_{q,1,2}\)
}
where $A^{(s)}_{t+1,1,2}({\bf L}_{(s)},{\bf l}_{(s)},{\bf d}_{(s)},{\bf r}_{(s)},{\bf R}_{(s)})$  for  $t=q,s=0,1,\dots,p-1$ and $t=q-1,s=p,p+1,\dots,k-1$ are $1,2$-pentadiagonal matrices with diagonal vectors
\Eq{*}{
\begin{array}{rll}
{\bf L}_{(s)} &=(L_s,L_{s+k},\dots,L_{s+(t-2)k}),        \quad
{\bf R}_{(s)} &=(R_s,R_{s+k},\dots,R_{s+(t-2)k}),      \\
{\bf l}_{(s)} &=(l_s,l_{s+k},\dots,l_{s+(t-1)k}),\qquad \,\,
{\bf r}_{(s)} &=(r_s,r_{s+k},\dots,r_{s+(t-1)k}),       \\
{\bf d}_{(s)} &=(d_s,d_{s+k},\dots,d_{s+tk}).        &
\end{array}
}
We remark that $\sigma$ is the same permutation as the one used by Egerv\'ary and Sz\'asz \cite{E}, see also  N. Bebiano and  S. Furtado \cite{BF2019} where a similar decomposition were given.

From \eq{decomp} it follows immediately
\begin{Thm} Let $1\le k< 2k\le n,$ then we have
\Eq{*}{
D_{n+1,k,2k}({\bf L},{\bf l},{\bf d}, {\bf r},{\bf R})=\prod\limits_{j=0}^{p-1}D^{(j)}_{q+1,1,2}\,
\prod\limits_{j=p}^{k-1}D^{(j)}_{q,1,2}
}
where $D^{(s)}_{t+1,1,2}$ denoted the determinant of the matrix $A^{(s)}_{t+1,1,2}({\bf L}_{(s)},{\bf l}_{(s)},{\bf d}_{(s)},{\bf r}_{(s)},{\bf R}_{(s)})$.
\end{Thm}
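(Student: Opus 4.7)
The plan is to take determinants of both sides of the block-diagonal decomposition \eq{decomp}, which was established immediately above the theorem statement. All the substantive combinatorial work has already been done in verifying that $P_\sigma A_0 P_\sigma^T$ is indeed the direct sum of $1,2$-pentadiagonal matrices of the correct sizes; what remains is essentially a two-line calculation.

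First I would note that $P_\sigma$ is a permutation matrix, hence orthogonal, so $P_\sigma^T=P_\sigma^{-1}$ and therefore $\det(P_\sigma)\det(P_\sigma^T)=\det(P_\sigma P_\sigma^{-1})=1$. Consequently,
\[
\det(P_\sigma A_0 P_\sigma^T)=\det(P_\sigma)\det(A_0)\det(P_\sigma^T)=\det(A_0)=D_{n+1,k,2k}({\bf L},{\bf l},{\bf d},{\bf r},{\bf R}).
\]

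Next, the right-hand side of \eq{decomp} is block-diagonal with $p$ blocks $A^{(j)}_{q+1,1,2}$ of order $q+1$ (for $j=0,\dots,p-1$) and $k-p$ blocks $A^{(j)}_{q,1,2}$ of order $q$ (for $j=p,\dots,k-1$). Since the determinant of a direct sum equals the product of the determinants of its blocks,
\[
\det\!\left(\bigoplus_{j=0}^{p-1}A^{(j)}_{q+1,1,2}\bigoplus_{j=p}^{k-1}A^{(j)}_{q,1,2}\right)=\prod_{j=0}^{p-1}D^{(j)}_{q+1,1,2}\prod_{j=p}^{k-1}D^{(j)}_{q,1,2}.
\]
Equating the two expressions yields the asserted identity.

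Because the theorem is a direct corollary of \eq{decomp}, there is no genuine obstacle in this proof; the only subtlety is the bookkeeping of block sizes. Writing $n+1=qk+p$ with $0\le p<k$, the permutation $\sigma$ groups the $n+1$ indices into $k$ orbits, of which precisely $p$ have length $q+1$ and $k-p$ have length $q$, and this accounts exactly for the two products appearing in the formula. All the heavy lifting lies in the verification of \eq{decomp} itself (matching entries of $A_0$ to the claimed $1,2$-pentadiagonal blocks via the orbit structure of $\sigma$), which the paper has already carried out and attributes to the rearrangement of Egerv\'ary and Sz\'asz.
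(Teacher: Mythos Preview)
Your proof is correct and matches the paper's approach exactly: the paper simply states that the theorem follows immediately from \eq{decomp}, and you have spelled out the two standard facts (that conjugation by a permutation matrix preserves the determinant, and that the determinant of a direct sum is the product of the block determinants) which make this immediate.
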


For Toeplitz matrices we get (with the notation $B^{\oplus j}:=\underbrace{B\oplus \cdots \oplus B}_{j}$)
\Eq{*}{
P_\sigma A_{n+1,k,2k}( L, l, d, r, R)P_\sigma^T=A_{q+1,1,2}( L, l, d, r, R)^{\oplus p}A_{q,1,2}( L, l, d, r, R)^{\oplus (k-p)}
}
as  in this case  $A^{(s)}_{t+1,1,2}({\bf L}_{(s)},{\bf l}_{(s)},{\bf d}_{(s)},{\bf r}_{(s)},{\bf R}_{(s)})=A_{t+1,1,2}( L, l, d, r, R)$  for  $t=q,s=0,1,\dots,p-1$ and $t=q-1,s=p,p+1,\dots,k-1$.

For the determinants we get
\Eq{*}{
D_{n+1,k,2k}( L,l,d,r,R)=D_{q+1,1,2}(L,l,d,r,R)^{p}D_{q,1,2}(L,l,d,r,R)^{k-p}
}
thus it is enough to calculate the determinants of $1,2$-pentadiagonal matrices. To do this we could use the iteration formulae  \eq{iterd}, \eq{iterds} (which are now considerably simpler) to find the diagonal elements then by \eq{detA0} to find the determinants. However it seems easier to apply existing recursion formulae for the determinants.
The six term recursion of R.A. Sweet \cite{S1969} is applicable for the determinants of Toeplitz matrices but its coefficients contain fractions and are more complicated than those of the seven term recursion found by J. Jia, B. Yang, S. Li \cite{JYL2016} thus we apply the latter.

Let  $D(n+1):=D_{n+1,1,2}(L,l,d,r,R)$ for $n\ge 2$ and let $D(-2)=0, D(-1)=0,D(0)=1,$ $D(1)=d, D(2)=d^2-lr$. For the determinant $D(3)$ we easily obtain
\Eq{initval}{
D(3)=d^3-d(LR+2lr)+Lr^2+Rl^2.
}
The recursion of \cite{JYL2016} with our notations is
\Eq{recform}{
D(n)&=dD(n-1)+(LR-lr)D(n-2)+(Lr^2+Rl^2-2dLR)D(n-3)\\
&+LR(LR-lr)D(n-4)+dL^2R^2D(n-5)-L^3R^3D(n-6)\quad(n=4,5,6,\dots).
}
Calculating $D(n)$ for $n=4,5,6,7,8,9$ using \eq{initval}, \eq{recform} with Maple software we obtain
\Eq{*}{
&d^4\!-\!d^2(2LR\!+\!3lr)\!+\!d(2Lr^2\!+\!2Rl^2)\!+\!L^2R^2\!-\!2LRlr\!+\!l^2r^2,\\\\
&d^5\!\!-\!d^3(3LR\!+\!4lr)\!+\!d^2(3Lr^2\!+\!3Rl^2)\!
+\!d(2L^2R^2\!\!-\!2LRlr\!+\!3l^2r^2)\!+\!(L^2Rr^2\!+\!R^2Ll^2\!-\!2Llr^3\!-\!2Rrl^3),\\\\
&d^6\!-\!d^4(4LR\!+\!5lr)\!+\!d^3(4Lr^2\!+\!4Rl^2)\!+\!d^2(4L^2R^2\!
+\!6l^2r^2)\!+\!d(\!-\!6Llr^3\!-\!6Rl^3r)\\
&\!+\!(\!-\!4L^2R^2lr\!+\!L^2r^4\!+\!6LRl^2r^2\!+\!R^2l^4\!-\!l^3r^3),\\\\
&d^7\!-\!d^5(5LR\!+\!6lr)\!+\!d^4(5Lr^2\!+\!5Rl^2)\!+\!d^3\!(7L^2R^2\!+\!4LRlr\!+\!10l^2r^2)\\
&\!\!-\!d^2\!(3L^2Rr^2\!\!+\!3R^2Ll^2\!\!+\!12Llr^3\!+\!12Rrl^3\!)\!
\!-\!d(2L^3R^3\!+\!6L^2R^2lr\!-\!3L^2r^4\!-\!3R^2l^4\!-\!15LRl^2r^2\!+\!4l^3r^3\!)\\
&\!+\!(3L^3R^2r^2\!+\!3R^3L^2l^2\!-\!6L^2Rlr^3\!-\!6R^2Lrl^3\!+\!3Ll^2r^4\!+\!3Rr^2l^4),\\\\
&d^8\!-\!d^6(6LR\!+\!7lr)\!+\!d^5(6Lr^2\!+\!6Rl^2)\!+\!d^4(11L^2R^2\!+\!10LRlr\!+\!15l^2r^2)\\
&\!+\!d^3(\!-\!8L^2Rr^2\!-\!8LR^2l^2\!-\!20Llr^3\!-\!20Rl^3r)\\
&\!+\!d^2(\!-\!6L^3R^3\!-\!9L^2R^2lr\!+\!6L^2r^4\!+\!24LRl^2r^2\!+\!6R^2l^4\!-\!10l^3r^3)\\
&\!+\!d(6L^3R^2r^2\!+\!6L^2R^3l^2\!-\!12L^2Rlr^3\!-\!12LR^2l^3r\!+\!12Ll^2r^4\!+\!12Rl^4r^2)\\
&\!+\!(R^4L^4\!-\!6L^3R^3lr\!+\!2L^3Rr^4\!+\!15L^2R^2l^2r^2\!-\!3L^2lr^5\!+\!2LR^3l^4
\!-\!12LRl^3r^3\!-\!3R^2l^5r\!+\!l^4r^4),\\\\
&d^9\!+\!(\!-\!7LR\!-\!8lr)d^7\!+\!(7Lr^2\!+\!7Rl^2)d^6\!+\!(16L^2R^2\!+\!18LRlr\!+\!21l^2r^2)d^5\\
&\!+\!(\!-\!15L^2Rr^2\!-\!15LR^2l^2\!-\!30Llr^3\!-\!30Rl^3r)d^4\\
&\!+\!(\!-\!13L^3R^3\!-\!16L^2R^2lr\!+\!10L^2r^4\!+\!30LRl^2r^2\!+\!10R^2l^4\!-\!20l^3r^3)d^3\\
&\!+\!(12L^3R^2r^2\!+\!12L^2R^3l^2\!-\!12L^2Rlr^3\!-\!12LR^2l^3r\!+\!30Ll^2r^4\!+\!30Rl^4r^2)d^2\\
&\!+\!(3L^4R^4\!-\!6L^3R^3lr\!+\!3L^3Rr^4\!+\!30L^2R^2l^2r^2\!-\!12L^2lr^5\!
+\!3LR^3l^4\!-\!44LRl^3r^3\!-\!12R^2l^5r\!+\!5l^4r^4)d\\
&\!+\!3L^4R^3r^2\!+\!3L^3R^4l^2\!\!-\!\!16L^3R^2lr^3\!+\!L^3r^6\!\!-\!\!16L^2R^3l^3r\!\!
+\!\!18L^2Rl^2r^4\!\!+\!\!18LR^2l^4r^2\!\!-\!\!4Ll^3r^5\!\!+\!\!R^3l^6\!\!-\!\!4Rl^5r^3.
}
Observing these determinants we see that they are monic polynomials
\Eq{*}{
p_n(L,l,d,r,R)=d^n+\sum\limits_{j=0}^{n-2} A_{n,j}(L,l,r,R)d^j
}
 of degree $n$ in $d$ where the coefficients  $A_{n,j}(L,l,r,R)\,(j=0,\dots,n-2)$  are polynomials of $L,l,r,R$ of degree $n-k$ which are symmetric in $L,R$ and $l,r$. Unfortunately for larger $n$ the formulae for these polynomials are too long. With this we have proved
\begin{Thm} Let $1\le k< 2k\le n,$ $n+1=kq+p, \,0\le p<k$ then for $q=3,4,5,6,7,8$ the determinant of
$A_{n+1,k,2k}(L,l,d,r,R)$ is
\Eq{*}{
D_{n+1,k,2k}(L,l,d,r,R)=p_{q+1}(L,l,d,r,R)^{p}\,\,p_{q}(L,l,d,r,R)^{k-p}
}
where the polynomials $p_n$ are given above and by \eq{initval}.
\end{Thm}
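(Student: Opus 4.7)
My plan is to derive this as an immediate consequence of the Toeplitz decomposition established earlier in Section 3 together with the explicit determinant computations carried out via the Jia--Yang--Li recursion \eq{recform}. The key observation is that the rearrangement theorem (the Toeplitz version of the decomposition \eq{decomp}) already yields
\Eq{*}{
D_{n+1,k,2k}(L,l,d,r,R)=D_{q+1,1,2}(L,l,d,r,R)^{p}\,D_{q,1,2}(L,l,d,r,R)^{k-p},
}
so the only remaining task is to identify $D_{m,1,2}(L,l,d,r,R)$ with the polynomial $p_m(L,l,d,r,R)$ for $m=q,q+1\in\{3,4,\dots,9\}$.

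First I would set $D(m):=D_{m,1,2}(L,l,d,r,R)$ and record the initial data $D(0)=1$, $D(1)=d$, $D(2)=d^2-lr$, together with $D(3)$ as in \eq{initval}; these are trivial direct computations of $1,2$-pentadiagonal Toeplitz determinants of low order. Then I would invoke the six-step recursion \eq{recform} of \cite{JYL2016} to propagate the values $D(4),\dots,D(9)$. This step is purely mechanical (and indeed is what the author carried out by Maple); the outputs are displayed in the excerpt and I would simply cite those computations as the definition of $p_4,\dots,p_9$. At each stage one checks that $D(m)$ is a monic polynomial of degree $m$ in $d$ whose remaining coefficients are polynomials of total degree $m-k$ in $L,l,r,R$, symmetric in the pairs $(L,R)$ and $(l,r)$; this symmetry is inherited from the symmetry of the recursion \eq{recform} under the simultaneous swap $L\leftrightarrow R$, $l\leftrightarrow r$, and from the symmetry of the initial values.

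Combining the two ingredients, for each $q\in\{3,4,5,6,7,8\}$ one has $D_{q,1,2}=p_q$ and $D_{q+1,1,2}=p_{q+1}$, so the displayed Toeplitz decomposition specializes to
\Eq{*}{
D_{n+1,k,2k}(L,l,d,r,R)=p_{q+1}(L,l,d,r,R)^{p}\,p_{q}(L,l,d,r,R)^{k-p},
}
which is the claimed formula. (When $p=0$, the first factor is empty by the convention $\prod_{j=0}^{-1}:=1$, and the identity reduces to $p_q^{k}$, which is consistent.)

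The principal obstacle is genuinely just bookkeeping: the recursion \eq{recform} has six terms with nontrivial coefficients $LR-lr$, $Lr^2+Rl^2-2dLR$, etc., and the iterated polynomial expressions balloon rapidly in size, which is why the author restricts to $q\le 8$ and relies on symbolic algebra. There is no conceptual difficulty beyond verifying that the recursion is applied correctly from the stated base cases; no cancellation or nontrivial identity is needed. If one wished to extend the theorem to arbitrary $q$, one would have to exhibit a closed form for $p_n(L,l,d,r,R)$, and the present form of the recursion (with coefficients depending on $d$) does not immediately deliver one—that would be the real obstruction to removing the hypothesis $q\le 8$.
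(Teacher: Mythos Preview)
Your proposal is correct and follows essentially the same route as the paper: first invoke the Toeplitz specialization of the Egerv\'ary--Sz\'asz decomposition to obtain $D_{n+1,k,2k}=D_{q+1,1,2}^{\,p}D_{q,1,2}^{\,k-p}$, then compute $D(3),\dots,D(9)$ from \eq{initval} and the Jia--Yang--Li recursion \eq{recform} (by Maple) and name them $p_3,\dots,p_9$. The paper closes with the sentence ``With this we have proved'' immediately before the theorem, so your identification of the two ingredients and the purely computational nature of the remaining step matches the original argument exactly.
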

We conjecture that this theorem is true for all possible values of $q$.

The imperfect Toeplitz determinants $D^{(\alpha,\beta)}(n+1):=D_{n+1,k,2k}^{(\alpha,\beta)}(L,l,d,r,R)$
can be calculated by the recursion
\Eq{rec}{
D^{(\alpha,\beta)}(n)=D(n)-(\alpha+\beta)D(n-1)+\alpha\beta D(n-2)
}
found by Marr and Vineyard \cite{MV} and using the previous theorem. Another possibility is to use a nine term recursion based on \eq{rec} and \eq{recform}.

\end{document}